\theoremstyle{definition}
\newtheorem{thm}{Theorem}
\newtheorem{lem}[thm]{Lemma}
\newtheorem{cor}[thm]{Corollary}
\begin{document}

\title{Proper $3$-orientations of bipartite planar graphs with minimum degree at least $3$}

\author{
Kenta Noguchi\thanks{Department of Information Sciences, 
Tokyo University of Science, 
2641 Yamazaki, Noda, Chiba 278-8510, Japan.
Email: {\tt noguchi@rs.tus.ac.jp}}
}

\date{}
\maketitle

\noindent
\begin{abstract}
In this short note, 
we show that every bipartite planar graph with minimum degree at least $3$ has proper orientation number at most $3$. 
\end{abstract}

\noindent
\textbf{Keywords.}
proper orientation, planar graph, bipartite graph

\section{Introduction}
\label{introsec}
For basic terminology in graph theory undefined in this paper, refer to \cite{BM}. 
Let $G$ be a simple graph. 
An {\em orientation} $\sigma$ of $G$ is a digraph 
obtained from $G$ by replacing each edge by exactly one of the two possible arcs 
with the same end-vertices. 
Let $E_{\sigma}(G)$ be the resulting arc set. 
For $v \in V(G)$, 
the {\em indegree} of $v$ in $\sigma$, denoted by $d^-_{\sigma}(v)$, 
is the number of arcs in $E_{\sigma}(G)$ incoming to $v$. 
We denote by $\Delta^-(\sigma)$ the {\em maximum indegree} of $\sigma$. 
An orientation $\sigma$ with $\Delta^-(\sigma)$ at most $k$ is a {\em $k$-orientation}. 
An orientation $\sigma$ of $G$ is {\em proper} 
if $d^-_{\sigma}(u) \ne d^-_{\sigma}(v)$ for every $uv \in E(G)$. 
The {\em proper orientation number} of $G$, 
denoted by $\overrightarrow{\chi}(G)$, 
is the minimum integer $k$ such that $G$ admits a proper $k$-orientation. 
Proper orientation number is defined by Ahadi and Dehghan \cite{AD}, 
and see the related research \cite{ACdHM, AHLS, KMdML}. 
Knox et al. \cite{KMdML} showed the following. 

\begin{thm}[\cite{KMdML}, Theorem 3] 
\label{thmA}
Let $G$ be a $3$-connected bipartite planar graph. 
Then $\overrightarrow{\chi}(G) \le 5$. 
\end{thm}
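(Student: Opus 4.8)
The plan is to reduce the statement to the existence of a single orientation whose indegree sequence is \emph{forced} to be proper by the bipartition, and then to verify that such an orientation exists by a network-flow (Hall-type) argument that feeds on planarity. First I would fix a bipartition $V(G)=A\cup B$. Since $G$ is $3$-connected it has minimum degree at least $3$, so in particular every vertex of $B$ has degree at least $3$; this is essentially the only place the connectivity hypothesis is needed. The target is an orientation $\sigma$ with
\[
 d^-_\sigma(a)\in\{0,1,2\}\ \ (a\in A),\qquad d^-_\sigma(b)\in\{3,4,5\}\ \ (b\in B).
\]
Because $\{0,1,2\}\cap\{3,4,5\}=\varnothing$, any such $\sigma$ is automatically proper and is a $5$-orientation, so it suffices to prove that $\sigma$ exists.

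Next I would phrase the existence of $\sigma$ as an orientation problem with prescribed lower and upper bounds on the indegrees, namely $0\le d^-(a)\le 2$ and $3\le d^-(b)\le 5$, and attack it with the classical characterisation of realisable indegree-constrained orientations (Hakimi; Frank--Gyárfás): after modelling the head of each edge as a unit of flow, feasibility is equivalent to a family of inequalities of Hall type ranging over all vertex subsets $S$. The crucial input is the planar edge bound $|E(G[S])|\le 2|S|-4$, valid for every $S$ with $|S|\ge 3$, together with $\delta(G)\ge 3$; these should be exactly strong enough to certify each inequality. Intuitively, orienting as many edges as possible from $A$ into $B$ overfills $B$, and only vertices of degree larger than $5$ are forced to send arcs back into $A$; planarity (equivalently, the absence of a $K_{3,3}$) prevents such high-degree vertices of $B$ from clustering around few vertices of $A$, so the arcs returned to $A$ never exceed the total capacity $2|A|$.

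The hard part, and the step I expect to consume most of the work, is precisely the verification of the Hall-type inequalities in the presence of the lower bounds $d^-(b)\ge 3$: upper-bound-only orientation problems follow immediately from $|E(G[S])|\le 2|S|$, but the simultaneous lower bounds couple the two sides, and the naive subset inequalities are not obviously implied by the $2|S|-4$ bound alone. I would handle this either by a direct max-flow min-cut computation on the associated bipartite network, pushing the planar bound through each cut, or, if a clean cut analysis proves elusive, by induction on $|V(G)|$: delete a vertex of degree at most $3$ (one exists since the average degree is below $4$), orient the smaller graph, and reinsert the vertex while locally repairing a bounded number of indegrees.

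A fallback, should the strict split $\{0,1,2\}$ versus $\{3,4,5\}$ turn out to be infeasible for some sparse configuration, is to relax the target to \emph{any} proper assignment of values in $\{0,1,2,3,4,5\}$ and invoke the realisability theorem for prescribed indegree sequences, choosing the target values greedily along a degeneracy ordering so that adjacent vertices always receive distinct values within the allowed range. In either route, the planar edge bound and $\delta(G)\ge 3$ are the two ingredients doing all the work, and the bound $5$ appears as the slack needed to guarantee that the flow inequalities can always be met.
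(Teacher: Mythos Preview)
The paper does not prove this statement itself---it is quoted from \cite{KMdML}---but it supersedes it: Theorem~\ref{mainthm2}, via Theorem~\ref{mainthm1}, yields the sharper bound $\overrightarrow{\chi}(G)\le 3$ under the weaker hypothesis $\delta(G)\ge 3$.

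Your core idea, forcing properness by sending the two colour classes to disjoint indegree intervals, is exactly what the paper does, but you have chosen an unnecessarily hard target. By asking for $d^-_\sigma(b)\in\{3,4,5\}$ you impose simultaneous lower and upper bounds on $B$, and the lower bound is precisely what generates the Hall-type coupling you flag as ``the hard part''. The paper sidesteps this entirely: start from a Hakimi $2$-orientation $\sigma$ (upper bound only, so existence is immediate from $\mbox{Mad}(G)<4$); then, for each $b\in B$, reverse just enough outgoing arcs of $b$ to bring its indegree up to exactly $3$, which is possible since $\deg(b)\ge 3$. Reversing an arc $(b,a)$ to $(a,b)$ raises $d^-(b)$ and \emph{lowers} $d^-(a)$, so every $a\in A$ still has indegree at most $2$. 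Now $B$ is identically $3$ and $A$ is at most $2$: proper, with $\Delta^-=3$ rather than $5$.

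So your plan would in fact succeed---the orientation you seek certainly exists, since the stricter one with every $b\in B$ at indegree exactly $3$ already does---but the flow/Frank--Gy\'arf\'as/induction machinery is not needed, and aiming at $5$ gives away the real strength of the planar edge bound. The step you single out as hard evaporates once you observe that the lower bound on $B$ can be enforced \emph{after} obtaining an upper-bound-only orientation, by local reversals that can only help the $A$ side.
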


The main theorems of this paper are as follows. 
The {\em maximum average degree} $\mbox{Mad}(G)$ of a graph $G$ is defined as 
\begin{eqnarray*}
\mbox{Mad}(G) = \mbox{max} \bigg\{ \frac{2|E(H)|}{|V(H)|}: H~ \mbox{is a subgraph of}~ G\bigg\}.
\end{eqnarray*}

\begin{thm}
\label{mainthm1}
Let $k$ be a positive integer and 
$G = G[X, Y]$ be a bipartite graph with $\mbox{Mad}(G) \le 2k$. 
If $\deg(x) \ge k+1$ for every $x \in X$, 
then $\overrightarrow{\chi}(G) \le k+1$. 
\end{thm}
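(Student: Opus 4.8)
The plan is to produce an orientation in which every vertex of $X$ has indegree exactly $k+1$ while every vertex of $Y$ has indegree at most $k$. Since $X$ and $Y$ are the two sides of the bipartition, every edge then joins a vertex of indegree $k+1$ to a vertex of indegree at most $k$, so such an orientation is automatically proper and has maximum indegree $k+1$; this yields $\overrightarrow{\chi}(G) \le k+1$ at once.

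First I would use the hypothesis $\mbox{Mad}(G) \le 2k$ to obtain a starting $k$-orientation $\sigma$. The inequality $\mbox{Mad}(G)\le 2k$ is equivalent to $|E(H)| \le k|V(H)|$ for every subgraph $H$, which is exactly the Hakimi-type criterion guaranteeing an orientation with $d^-_{\sigma}(v) \le k$ for every $v$; I would establish this by a standard flow (or Hall) argument, noting that for any vertex set $S$ the internal edges of $G[S]$ must have their heads in $S$, so $|E(G[S])| \le \sum_{v \in S} d^-_{\sigma}(v) \le k|S|$, and that this necessary condition is also sufficient. Thus in $\sigma$ every vertex, in particular every $y \in Y$, has indegree at most $k$.

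Next I would raise the indegree of each $x \in X$ to exactly $k+1$ by reversing arcs. Since $\deg(x) \ge k+1$ and $d^-_{\sigma}(x) \le k$, the outdegree $\deg(x) - d^-_{\sigma}(x) \ge k+1 - d^-_{\sigma}(x)$ is at least the number of arcs I must turn around, so $x$ has enough outgoing arcs; I reverse exactly $k+1 - d^-_{\sigma}(x)$ of them to point into $x$. The crucial observation is that $G$ is bipartite, so every arc has exactly one endpoint in $X$: the outgoing arcs of distinct vertices of $X$ are pairwise disjoint, and these reversals can therefore be carried out simultaneously without interfering with one another, each $x$ retaining its original indegree until it is treated. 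After the reversals every $x \in X$ has indegree precisely $k+1$, while the indegree of each $y \in Y$ can only have decreased, since a reversed arc $x \to y$ becomes $y \to x$; hence each $y$ still has indegree at most $k$.

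I expect the only real work to be the first step, the existence of the $k$-orientation extracted from the density bound; once that orientation is in hand, the bipartite structure makes both the indegree adjustment and the verification of properness essentially immediate. In other words, the orientation lemma is the (standard) ingredient to pin down carefully, whereas the simultaneous-reversal argument is the easy new idea that combines $\mbox{Mad}(G) \le 2k$ with the degree condition on $X$. Specializing to $k=2$, a bipartite planar graph has $\mbox{Mad}(G) < 4$ and minimum degree at least $3$ makes both sides eligible as $X$, recovering the bound $\overrightarrow{\chi}(G) \le 3$ claimed in the abstract.
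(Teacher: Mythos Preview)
Your proposal is correct and follows essentially the same route as the paper: obtain a $k$-orientation from Hakimi's theorem via $\mbox{Mad}(G)\le 2k$, then reverse outgoing arcs at each $x\in X$ until its indegree is exactly $k+1$, noting that these reversals can only lower indegrees on $Y$. The paper invokes Hakimi's result directly rather than re-deriving it, but the argument is otherwise identical.
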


Theorem \ref{mainthm1} partially answers Problem 5 in \cite{ACdHM}, 
which asks whether $\overrightarrow{\chi}(G)$ can be bounded by a function of $\mbox{Mad}(G)$. 
Using Theorem \ref{mainthm1}, 
we can state the following. 
The {\em minimum degree} of $G$ is denoted by $\delta(G)$. 

\begin{thm}
\label{mainthm2}
Let $G$ be a bipartite planar graph with $\delta(G) \ge 3$. 
Then $\overrightarrow{\chi}(G) \le 3$. 
\end{thm}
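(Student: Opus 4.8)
The plan is to apply Theorem \ref{mainthm1} with $k = 2$, so that the conclusion becomes $\overrightarrow{\chi}(G) \le k+1 = 3$. For this it suffices to verify the two hypotheses of that theorem: that $\mbox{Mad}(G) \le 2k = 4$, and that one side of the bipartition consists entirely of vertices of degree at least $k+1 = 3$. The second hypothesis is immediate from the assumption $\delta(G) \ge 3$: I would fix any bipartition $G = G[X,Y]$ (for a disconnected graph, combining the bipartitions of its components), and then every $x \in X$ satisfies $\deg(x) \ge \delta(G) \ge 3 = k+1$. In fact both sides work, so the choice of $X$ is irrelevant.

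The first hypothesis is where planarity and bipartiteness are used. First I would note that every subgraph $H$ of $G$ is again simple, bipartite, and planar, hence has girth at least $4$ unless it is a forest. Euler's formula then gives the standard edge bound for simple bipartite planar graphs: a subgraph $H$ on $n = |V(H)| \ge 3$ vertices has at most $2n - 4$ edges, while for $n \le 2$ it has at most one edge. Consequently, for every subgraph $H$ with at least one vertex,
\[
\frac{2|E(H)|}{|V(H)|} \le \frac{2\big(2|V(H)| - 4\big)}{|V(H)|} = 4 - \frac{8}{|V(H)|} < 4 ,
\]
with the analogous (even smaller) value in the trivial cases $|V(H)| \le 2$. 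Taking the maximum over all subgraphs $H$ yields $\mbox{Mad}(G) < 4$, and in particular $\mbox{Mad}(G) \le 4 = 2k$.

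With both hypotheses in hand, Theorem \ref{mainthm1} applies directly and gives $\overrightarrow{\chi}(G) \le k+1 = 3$, completing the argument. I do not expect any genuine obstacle here: the whole statement is essentially a one-step reduction to Theorem \ref{mainthm1}, and the only real computation is the bound $\mbox{Mad}(G) < 4$, which is the classical consequence of Euler's formula for bipartite planar graphs. The one point deserving a line of care is the handling of small or acyclic subgraphs, for which the clean inequality $|E(H)| \le 2|V(H)| - 4$ must be replaced by a trivial edge count; but since the ratio $2|E(H)|/|V(H)|$ stays strictly below $4$ in every case, no difficulty arises.
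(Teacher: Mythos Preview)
Your proposal is correct and follows essentially the same route as the paper: bound $\mbox{Mad}(G)<4$ via the standard edge inequality for bipartite planar graphs (the paper's Lemma~\ref{lem1}) and then apply Theorem~\ref{mainthm1} with $k=2$. Your write-up merely spells out in more detail the handling of small subgraphs and the choice of bipartition, but the argument is the same.
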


Both of the bounds of $\delta(G)$ and $\overrightarrow{\chi}(G)$ in Theorem \ref{mainthm2} are tight; see Theorems \ref{mainthm3} and \ref{mainthm4}, respectively. 
The following corollary immediately follows from Theorem \ref{mainthm2}, 
which is the improvement of Theorem \ref{thmA}. 

\begin{cor}
Let $G$ be a $3$-connected bipartite planar graph. 
Then $\overrightarrow{\chi}(G) \le 3$. 
\end{cor}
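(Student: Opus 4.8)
The plan is to reduce the statement directly to Theorem~\ref{mainthm2} by checking that every $3$-connected bipartite planar graph meets its hypotheses. Since being bipartite and planar is assumed outright, the only condition that needs verification is the degree bound $\delta(G) \ge 3$; everything else is already in place.

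First I would recall the classical inequality $\kappa(G) \le \lambda(G) \le \delta(G)$, valid for any graph on at least two vertices, where $\kappa(G)$ and $\lambda(G)$ denote the vertex- and edge-connectivity, respectively. A $3$-connected graph satisfies $\kappa(G) \ge 3$ by definition, so this chain immediately yields $\delta(G) \ge 3$. If one prefers a self-contained argument in place of citing the inequality, I would reason directly: were some vertex $v$ of degree at most $2$, then deleting its at most two neighbours would either isolate $v$ or separate $G$, contradicting $3$-connectivity.

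With $\delta(G) \ge 3$ established, $G$ is precisely a bipartite planar graph of minimum degree at least $3$, so Theorem~\ref{mainthm2} applies verbatim and gives $\overrightarrow{\chi}(G) \le 3$, which is the desired conclusion.

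I do not anticipate any genuine obstacle here: the entire mathematical content resides in Theorem~\ref{mainthm2}, and the corollary amounts to observing that, in the presence of the bipartite and planar assumptions, $3$-connectivity is a strictly stronger hypothesis than $\delta(G) \ge 3$. The only point worth stating carefully is this degree implication, and it follows from a standard fact.
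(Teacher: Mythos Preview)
Your proposal is correct and matches the paper's approach exactly: the paper states that the corollary ``immediately follows from Theorem~\ref{mainthm2}'' without giving a separate proof, and you have supplied precisely the missing one-line observation that $3$-connectivity forces $\delta(G)\ge 3$. There is nothing to add.
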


Furthermore, for {\em quadrangulations}, 
which are bipartite planar graphs whose each face is bounded by a $4$-cycle, 
the orientation number is completely determined as follows. 

\begin{thm}
\label{mainthm3}
Let $G$ be a quadrangulation with $\delta(G) \ge 3$. 
Then $\overrightarrow{\chi}(G) = 3$ unless $G$ is isomorphic to the $3$-cube $Q_3$. 
If $G$ is isomorphic to $Q_3$, then $\overrightarrow{\chi}(G) = 2$.
\end{thm}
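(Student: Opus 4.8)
The plan is to get the upper bound for free from Theorem \ref{mainthm2} and then pin down exactly when a proper $2$-orientation can exist. Write $n=|V(G)|$ and $m=|E(G)|$; Euler's formula gives $m=2n-4$ for a quadrangulation, and $2m=\sum_v\deg(v)\ge 3n$ then forces $n\ge 8$. Since $G$ is bipartite planar with $\delta(G)\ge 3$, Theorem \ref{mainthm2} gives $\overrightarrow{\chi}(G)\le 3$; and a proper $1$-orientation is impossible because it would give $m=\sum_v d^-_\sigma(v)\le n$, whereas $m=2n-4>n$. Hence $2\le\overrightarrow{\chi}(G)\le 3$, and the whole statement reduces to proving that $\overrightarrow{\chi}(G)=2$ holds if and only if $G\cong Q_3$.

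For the easy direction I would exhibit a proper $2$-orientation of $Q_3$: being cubic and bipartite, $Q_3$ has a perfect matching $M$, its complement $Q_3-M$ is a $2$-factor, and orienting $M$ from one colour class to the other and $Q_3-M$ the opposite way makes one class have indegree $1$ and the other indegree $2$; this is proper, so $\overrightarrow{\chi}(Q_3)\le 2$, with equality by the lower bound above. For the converse, suppose $\sigma$ is a proper $2$-orientation and let $S_i=\{v:d^-_\sigma(v)=i\}$ with $n_i=|S_i|$. Properness makes each $S_i$ independent, i.e.\ the indegrees form a proper $3$-colouring. Counting arcs gives $n_1+2n_2=m=2n-4$, which together with $n_0+n_1+n_2=n$ yields the key identity $2n_0+n_1=4$.

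The second ingredient is that every vertex of $S_2$ has all its neighbours in $S_0\cup S_1$ (a neighbour must have indegree $\ne 2$), so $\delta(G)\ge 3$ forces $n_0+n_1\ge 3$; subtracting from $2n_0+n_1=4$ gives $n_0\le 1$, leaving only $(n_0,n_1)\in\{(1,2),(0,4)\}$. If $(n_0,n_1)=(1,2)$ then $|S_0\cup S_1|=3$, so each $S_2$-vertex has degree exactly $3$ and is adjacent to all of $S_0\cup S_1$; as $S_2\ne\emptyset$ these three vertices form one part $X$ of the bipartition, with $|X|=3$, $Y=S_2$, and every vertex of $X$ adjacent to all of $Y$, whence $2n-4=m=\sum_{x\in X}\deg(x)=3(n-3)$ and $n=5$, contradicting $n\ge 8$. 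If $(n_0,n_1)=(0,4)$ then all edges run between $S_1$ (of size $4$) and $S_2$, so $2n-4=m=\sum_{w\in S_2}\deg(w)\ge 3(n-4)$ gives $n\le 8$; with $n\ge 8$ this forces $n=8$ and $G$ cubic, and a cubic quadrangulation (for which $3n/2=2n-4$) has $8$ vertices, $12$ edges and $6$ quadrilateral faces, hence is $Q_3$.

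The hard part will not be any single computation but the bookkeeping of the case analysis: one must argue that the indegree classes are forced to coincide with the bipartition---most delicately in the case $(1,2)$, where it is precisely the adjacency of every $S_2$-vertex to all of $S_0\cup S_1$ that collapses the graph to $n=5$. The one external fact I would want to pin down carefully is the uniqueness of the cube among cubic quadrangulations; deducing $G\cong Q_3$ from ``$8$ vertices, $12$ edges, cubic, all faces $4$-gons'' is cleanest via $3$-connectivity together with Steinitz's theorem, or by a direct structural check that the two colour classes are joined exactly as in $Q_3$.
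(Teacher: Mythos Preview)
Your proof is correct, and the opening is essentially the paper's: Theorem \ref{mainthm2} gives $\overrightarrow{\chi}(G)\le 3$, and from $m=2n-4$ one extracts that in any proper $2$-orientation at least $n-4$ vertices have indegree $2$ and form an independent set. Your identity $2n_0+n_1=4$ is exactly this statement, sharpened.

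Where you diverge is in the endgame. The paper argues that the indegree-$2$ vertices lie in one bipartition class $X$, whence $|Y|\le 4$; for $|Y|=3$ it exhibits a $K_{3,3}$ subgraph (contradicting planarity), and for $|Y|=4$ it runs a geometric case split on the planar embedding, asking whether two vertices of $X$ share three common neighbours in $Y$. You instead combine $2n_0+n_1=4$ with $n_0+n_1\ge 3$ (from $\delta\ge 3$ applied to any $S_2$-vertex) to reduce to $(n_0,n_1)\in\{(1,2),(0,4)\}$, and dispatch each case by pure degree-counting: the first collapses to $n=5<8$, the second forces $n=8$ and $G$ cubic. This trades the paper's planar-embedding picture for arithmetic, which is cleaner, but leaves you the residual task of identifying the unique cubic quadrangulation on $8$ vertices as $Q_3$. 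The quickest way to close that (rather than invoking Steinitz) is the observation you are circling: in a $3$-regular bipartite graph on $4+4$ vertices each vertex misses exactly one vertex of the other side, and counting non-edges shows this ``miss'' relation is a perfect matching, so $G\cong K_{4,4}-M\cong Q_3$.
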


Finally, we show the tightness of the minimum degree condition in Theorem \ref{mainthm2}. 
Note that, when $\delta(G) = 1$, Araujo et al. \cite[Corollary 2]{ACdHM} showed that there exists a tree $T$ with $\overrightarrow{\chi}(T) = 4$. 

\begin{thm}
\label{mainthm4}
There exist bipartite planar graphs $G$ with $\delta(G) = 2$ and $\overrightarrow{\chi}(G) = 4$. 
\end{thm}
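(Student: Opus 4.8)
The plan is to exhibit a single explicit graph $G$, so that the existential statement reduces to constructing $G$ and computing $\overrightarrow{\chi}(G)=4$. The natural starting point is the tree $T$ with $\overrightarrow{\chi}(T)=4$ from \cite[Corollary 2]{ACdHM}, which has $\delta(T)=1$. To raise the minimum degree to $2$ without destroying the obstruction, I would \emph{not} simply glue two copies of $T$ along their leaves: that gives each former leaf the new indegree option $2$, and since raising degrees tends to \emph{lower} the proper orientation number (this is exactly the phenomenon behind Theorem~\ref{mainthm2}), such gluing is likely to collapse $\overrightarrow{\chi}$ to $3$. Instead I would replace each degree-$1$ vertex by a small gadget — a short even path, or a pendant $4$-cycle inserted inside a face — chosen so that every vertex ends up with degree at least $2$, the two colour classes stay well defined, and the drawing stays planar. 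The routine part is then to check three things about the resulting $G$: it is bipartite, it is planar, and $\delta(G)=2$ with at least one degree-$2$ vertex in \emph{each} part, so that Theorem~\ref{mainthm1} (with $k=2$) does not already force $\overrightarrow{\chi}(G)\le 3$.

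For the upper bound $\overrightarrow{\chi}(G)\le 4$ I would simply display a proper $4$-orientation: take a proper $4$-orientation of the underlying forcing skeleton and extend it across the inserted gadget edges, orienting each new edge so that the added vertices receive indegrees avoiding their (few) neighbours. Since every added vertex has degree $2$, only indegrees in $\{0,1,2\}$ occur there, and the local conflicts are easy to resolve by hand.

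The heart of the argument, and the main obstacle, is the lower bound $\overrightarrow{\chi}(G)\ge 4$, i.e.\ that $G$ admits no proper $3$-orientation. Here I would use the standard reformulation: a proper $3$-orientation is an assignment $f\colon V(G)\to\{0,1,2,3\}$ with $f(v)\le \deg(v)$, with $\sum_{v} f(v)=|E(G)|$, with adjacent vertices receiving distinct values, and (by the classical indegree-prescription criterion of Hakimi) with $\sum_{v\in S} f(v)\ge e(S)$ for every $S\subseteq V(G)$, where $e(S)$ is the number of edges inside $S$. Assuming such an $f$ exists, I would run a forcing/propagation argument starting from the degree-$2$ gadgets: each gadget constrains the indegree of its attachment vertex, and propagating these constraints through the skeleton forces some vertex $w$ to see all of $\{0,1,2,3\}$ on its neighbourhood (or forces two adjacent vertices to both require indegree $3$), contradicting properness. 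The delicate point is precisely that a degree-$2$ vertex may take indegree $2$, an option a leaf did not have, which opens potential escape routes for the orientation; the crux of the proof is to verify that the gadgets are engineered so that in every case these extra options are themselves blocked by the adjacency and by the Hakimi inequalities, so that the $\delta=1$ obstruction survives the passage to $\delta=2$.
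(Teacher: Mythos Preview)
What you have written is a plan, not a proof: the graph $G$ is never actually specified, and the lower bound is never actually verified. Worse, the gadgets you name --- a short even path or a single pendant $4$-cycle attached at each former leaf --- are almost certainly too weak. They give each former leaf only one or two degree-$2$ companions, and then nothing forces any of those companions to receive indegree $2$ in a putative proper $3$-orientation; the escape route you yourself flag (the newly available value ``indegree $2$'' at a degree-$2$ vertex) stays wide open, and the tree obstruction from \cite{ACdHM} simply dissolves. Invoking the Hakimi inequalities does not help here either, since those inequalities are \emph{necessary and sufficient} for realisability, so they cannot rule out an indegree profile that your gadget actually admits.

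The missing idea is multiplicity. The paper replaces each leaf-level gadget by a copy of $K_{2,7}$: seven degree-$2$ vertices all joined to the same pair $\{b,c\}$. In any $3$-orientation $b$ and $c$ together can absorb at most six incoming arcs from these seven vertices, so by pigeonhole some one of the seven has both its edges oriented inward, hence indegree $2$; this forbids indegree $2$ at $b$ and $c$. This counting trick is then nested over three levels (with four parallel copies at each of the two inner levels, plus a further $K_{2,7}$ of auxiliary degree-$2$ vertices), so that at each stage a pigeonhole count excludes one more indegree value at the hub vertices, until the central pair $\{s,t\}$ is forced to receive at least eight arcs from eight neighbours of indegree at most $1$, overloading one of $s,t$. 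The argument is pure arc-counting --- no Hakimi-type subset inequalities are used --- and the specific multiplicities ($7=2\cdot 3+1$ at the leaves, four at the inner levels) are what make it go through. Your outline contains none of this, and without it the lower bound does not follow.
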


\section{Proofs of main theorems}
\label{Proofs}

The proof of Theorem \ref{mainthm1} is very simple. 

\begin{proof}[Proof of Theorem \ref{mainthm1}] 
Let $G = G[X, Y]$ be a bipartite graph with $\mbox{Mad}(G) \le 2k$ 
satisfying $\deg(x) \ge k+1$ for every $x \in X$. 
By Hakimi's result \cite{Ha}, 
every graph $G$ with $\mbox{Mad}(G) \le 2k$ has a $k$-orientation $\sigma$. 
Let $S_{\sigma} = \{ (u, v) \in E_{\sigma}(G) \mid u \in X \}$. 
For every $x \in X$, 
since $\deg(x) \ge k+1$, 
we can switch the orientation of some edges in $S_{\sigma}$ 
so that the indegree of $x$ is exactly $k+1$. 
In the resulting orientation $\sigma'$ of $G$, 
observe that $d^-_{\sigma'}(y) \le k$ for every $y \in Y$. 
Thus, $\sigma'$ is a proper $(k+1)$-orientation of $G$. 
\end{proof}

To prove Theorem \ref{mainthm2}, 
we use the following well-known lemma. 

\begin{lem}
\label{lem1}
Let $G$ be a bipartite planar graph with $n \ge 4$ vertices. 
Then $|E(G)| \le 2n-4$ with equality holding if and only if $G$ is a quadrangulation. 
\end{lem}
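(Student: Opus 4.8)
The plan is to establish the Euler-type edge bound for bipartite planar graphs using the standard girth argument, and then to analyze the equality case. First I would recall Euler's formula $n - |E(G)| + f = 2$ for a connected plane graph with $f$ faces. Since $G$ is bipartite, it contains no odd cycles, and in particular no triangles, so every face of a $2$-connected plane embedding is bounded by a cycle of length at least $4$. The key combinatorial step is the double-counting of incidences between edges and faces: each edge lies on the boundary of at most two faces, giving $\sum_{F} \ell(F) \le 2|E(G)|$ where $\ell(F)$ is the length of the boundary walk of face $F$, while each face contributes at least $4$, so $4f \le \sum_{F} \ell(F) \le 2|E(G)|$, hence $f \le |E(G)|/2$.

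Substituting $f \le |E(G)|/2$ into Euler's formula yields $n - |E(G)| + |E(G)|/2 \ge 2$, which rearranges to $|E(G)| \le 2n - 4$, as desired. A small amount of care is needed because Euler's formula in the simple form above requires connectedness, and the face-length bound $\ell(F) \ge 4$ requires that each face boundary actually be a cycle; I would handle the general case by first assuming $G$ is connected and, if necessary, reduce disconnected or low-connectivity graphs to this case, noting that adding edges to a planar graph only helps the inequality. Alternatively, one can argue the bound holds for all bipartite planar graphs by observing that passing to a connected spanning supergraph (or handling each component separately and summing) preserves the inequality with room to spare when the graph is not connected.

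The more delicate part is the equality characterization. The inequality $|E(G)| \le 2n - 4$ becomes equality exactly when both inequalities used in deriving it are tight: namely $4f = \sum_F \ell(F)$ forces every face to be bounded by a walk of length exactly $4$, and $\sum_F \ell(F) = 2|E(G)|$ forces every edge to lie on exactly two distinct faces. Together these say that every face is a $4$-cycle, which is precisely the definition of a quadrangulation. Conversely, if $G$ is a quadrangulation then every face has length $4$ and every edge borders two faces, so both bounds are tight and equality holds.

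The hard part will be making the equality argument airtight, since it implicitly assumes a $2$-connected embedding in which face boundaries are genuine cycles rather than closed walks that may repeat vertices or edges. If $G$ has a cut-vertex or a bridge, some face boundary walk traverses an edge twice, and the clean ``every face is a $4$-cycle'' conclusion can fail unless handled carefully; I expect to argue that equality $|E(G)| = 2n-4$ forces $G$ to be $2$-connected (so that the face-length count is exact and all faces are simple $4$-cycles), and only then invoke the quadrangulation characterization. Since this lemma is described in the excerpt as \emph{well-known}, I anticipate the cleanest route is to cite the standard proof and spell out only the equality direction, treating the connectivity subtleties with a brief remark rather than a full case analysis.
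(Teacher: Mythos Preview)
Your proposal is correct and follows the standard Euler-formula argument for triangle-free (in particular, bipartite) planar graphs; however, the paper does not actually prove this lemma at all---it is stated as ``well-known'' and used without proof. So there is nothing to compare against, and your sketch is more than what the paper provides.
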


\begin{proof}[Proof of Theorem \ref{mainthm2}]
Let $G$ be a bipartite planar graph with $\delta(G) \ge 3$. 
By Lemma \ref{lem1}, $G$ has maximum average degree less than $4$. 
Thus, $\overrightarrow{\chi}(G) \le 3$ by Theorem \ref{mainthm1}. 
\end{proof}

\begin{proof}[Proof of Theorem \ref{mainthm3}]
Let $G[X, Y]$ be a quadrangulation with $\delta(G) \ge 3$, 
where $|X| \ge |Y|$. 
Note that $n = |X|+|Y| \ge 8$ 
and $|Y| \ge 3$. 
By Theorem \ref{mainthm2}, 
$\overrightarrow{\chi}(G) \le 3$. 
We show that $G$ does not admit a proper $2$-orientation 
or is isomorphic to $Q_3$. 
Since $|E(G)| = 2n-4$ by Lemma \ref{lem1}, 
if there exists a proper $2$-orientation $\sigma$ of $G$, 
then the number of vertices of indegree $2$ in $\sigma$ is at least $n-4$. 
Since two vertices of indegree $2$ in $\sigma$ cannot be adjacent in $G$, 
$|Y| \le 4$. 
If $|Y| = 3$, 
then three vertices $x_1, x_2, x_3$ in $X$ must be adjacent to all vertices in $Y$, 
and hence there exists a complete bipartite graph $K_{3, 3}$, 
contrary to the planarity of $G$. 
If $|Y| = 4$, let $Y = \{ y_1, y_2, y_3, y_4\}$. 

Case 1. 
Suppose that there exist two vertices $x_1, x_2 \in X$ 
such that each of them is adjacent to the same three vertices in $Y$, 
say $y_1 , y_2, y_3$ 
(see Figure \ref{fig:1}). 
We may assume that $y_4$ is in the face $f = x_1y_1x_2y_2$ by symmetry. 
Then the other vertices in $X$ must be in $f$ to be adjacent to at least $3$ vertices in $Y$ 
by the planarity of $K_{2, 3}$. 
So the degree of $y_3$ is exactly $2$, 
which contradicts to $\delta(G) \ge 3$. 

\begin{figure}[tb]
 \centering
  \includegraphics[width=3cm]{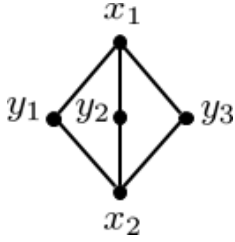}
  \caption{The complete bipartite graph $K_{2,3}$.}
 \label{fig:1}
\end{figure}

Case 2. 
Suppose that there exists no such pair of vertices in $X$. 
In this case one can see that $|X| = 4$ 
and that $G$ is isomorphic to $Q_3$. 
It is easy to show that $\overrightarrow{\chi}(Q_3) = 2$ 
(see also \cite[Proposition 2]{ACdHM}). 
\end{proof}

\begin{proof}[Proof of Theorem \ref{mainthm4}]
Consider the bipartite planar graph (quadrangulation) $G$ defined as follows (see Figure \ref{fig:2}). 
Let 
$V(G) = \{a_{ijk} \} \cup \{b_{ij}, c_{ij} \} \cup \{d_{ik} \} \cup \{p_i, q_i \} \cup \{ s, t \}$, 
where $1\le i, j \le 4, ~1 \le k \le 7$, 
and 
\begin{eqnarray*}
E(G) &=& \{a_{ijk}b_{ij}, a_{ijk}c_{ij} \mid 1\le i, j \le 4, ~1 \le k \le 7 \} \cup\\
& & \{b_{ij}p_i, b_{ij}q_i, c_{ij}p_i, c_{ij}q_i \mid 1\le i, j \le 4 \} \cup\\
& & \{d_{ik}p_i, d_{ik}q_i \mid 1\le i \le 4, ~1 \le k \le 7 \} \cup\\
& & \{p_is, p_it, q_is, q_it \mid 1\le i \le 4 \}. 
\end{eqnarray*}

\begin{figure}[tb]
 \centering
  \includegraphics[width=16cm]{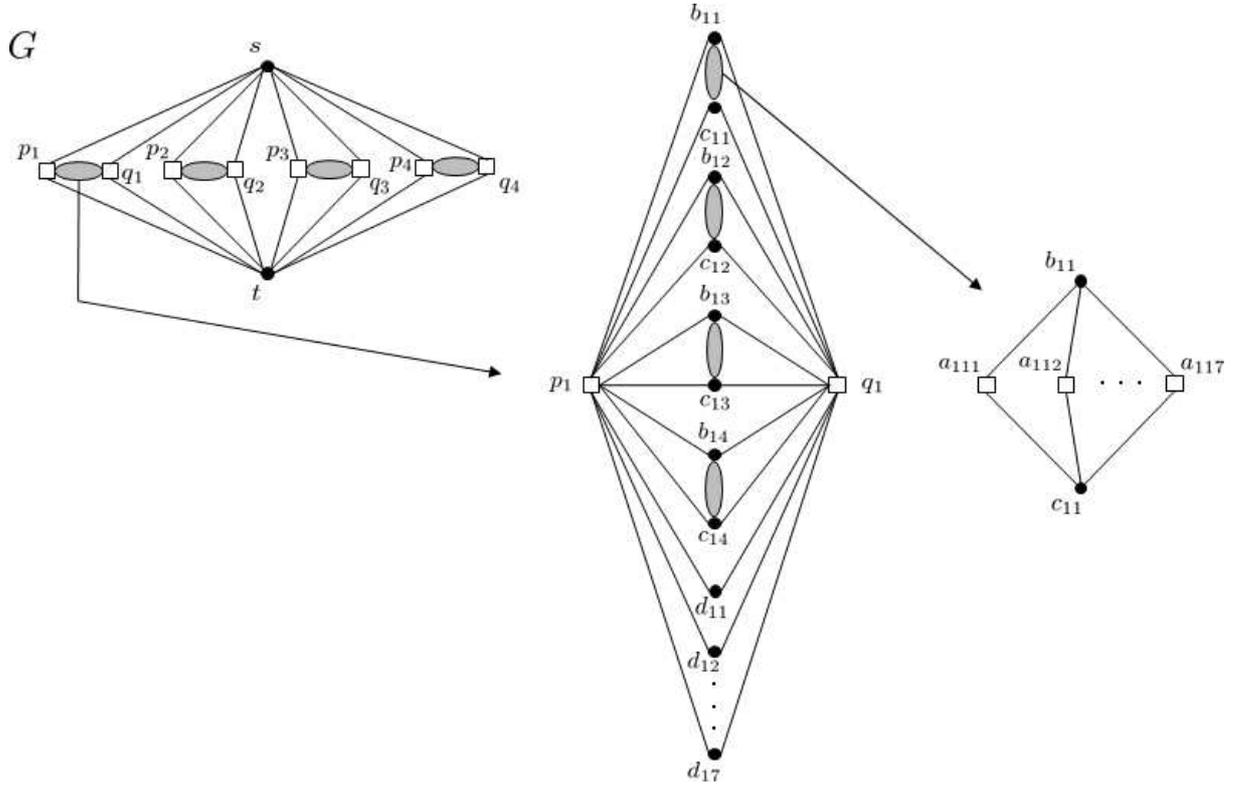}
  \caption{The bipartite planar graph $G$.}
 \label{fig:2}
\end{figure}

Suppose that $\overrightarrow{\chi}(G) \le 3$ and let $\sigma$ be a proper $3$-orientation of $G$. 
Thoughout the proof, an indegree means one in $\sigma$. 

Step 1.
Consider the graph induced by $\{a_{ijk} \} \cup \{b_{ij}, c_{ij} \}$ for fixed $i, j$. 
Since $d^-_{\sigma}(b_{ij}) \le 3$ and $d^-_{\sigma}(c_{ij}) \le 3$, 
there exists a vertex of indegree $2$ in $\{a_{ijk} \}$. 
So $b_{ij}$ and $c_{ij}$ cannot be indegree $2$ for all $1\le i, j \le 4$. 

Step 2.
Consider the graph induced by $\{a_{ijk} \} \cup \{b_{ij}, c_{ij} \} \cup \{d_{ik} \} \cup \{p_i, q_i \}$ for a fixed $i$. 
If all vertices in $\{b_{ij}, c_{ij} \}$ have indegree at most $1$, 
then at least one of the indegrees of $p_i$ and $q_i$ must be at least $4$, a contradiction. 
By this fact and Step 1, 
there exists a vertex of indegree $3$ in $\{ b_{ij}, c_{ij} \}$. 
So $p_i$ and $q_i$ cannot be indegree $3$ for all $1\le i \le 4$. 
Moreover, 
since there exists a vertex of indegree $2$ in $\{d_{ik} \}$ by the same argument in Step 1, 
$p_i$ and $q_i$ cannot be indegree $2$ for all $1\le i \le 4$. 

Step 3. 
By Steps 1 and 2, all of $p_1, \ldots, p_4, q_1, \ldots, q_4$ have indegree at most $1$. 
This leads to the fact that at least one of the indegrees of $s$ and $t$ must be at least $4$, 
a contradiction. 

Thus, $\overrightarrow{\chi}(G) \ge 4$. 
It is easy to show that $\overrightarrow{\chi}(G) \le 4$. 
(An infinite family of such graphs are easy to construct; add some vertices $a_{118}, \ldots, a_{11k}$ of degree $2$ to $G$ for example.) 
\end{proof}

\section*{Acknowledgements}
The author's work was partially supported by 
JSPS KAKENHI Grant Number 17K14239.

\end{document}